\newcommand{\R}{\mathbb{R}}
\newcommand{\N}{\mathbb{N}}
\newcommand{\E}[1]{\mathbb{E}\left(#1\right)}
\newtheorem{theorem}{Theorem}
\newtheorem{lemma}[theorem]{Lemma}
\newtheorem{corollary}[theorem]{Corollary}
\newtheorem{remark}[theorem]{Remark}
\newtheorem{example}[theorem]{Example}
\DeclareMathOperator{\dist}{dist}
\DeclareMathOperator{\acosh}{acosh}
\title{Fold maps associated to geodesic random walks on non-positively curved manifolds}
\author{Pablo Lessa\thanks{IMERL, Facultad de Ingeniería, Universidad de la República, Montevideo, Uruguay \href{mailto:plessa@fing.edu.uy}{plessa@fing.edu.uy}} 
\and 
Lucas Oliveira\thanks{Department of Pure and Applied Mathematics, Universidade Federal do Rio Grande do Sul, Porto Alegre, Brazil \href{mailto:lucas.oliveira@ufrgs.br}{lucas.oliveira@ufrgs.br}}}
\begin{document}
\maketitle
\begin{abstract}
 We study a family of mappings from the powers of  the unit tangent sphere at a point to a complete Riemannian manifold with non-positive sectional curvature, whose behavior is related to the spherical mean operator and the geodesic random walks on the manifold.

 We show that for odd powers of the unit tangent sphere the mappings are fold maps.

 Some consequences on the regularity of the transition density of geodesic random walks, and on the eigenfunctions of the spherical mean operator are discussed and related to previous work.

 \medskip \noindent {\bf Keywords:} Geodesic random walk, spherical mean operator, fold maps.
 
 \medskip \noindent {\bf AMS2010:} Primary 57R45, 60J10, 53C22. 
\end{abstract}

\section{Introduction}

\subsection{Geodesic random walks and spherical mean operators}

An \(r\)-geodesic random walk on a complete Riemannian manifold \((M,g)\) is a Markov chain where at each step one picks a uniformly chosen direction and advances a (fixed) distance \(r\) along the geodesic in that direction.

\begin{figure}[h]
\begin{center}
\includegraphics[width=0.6\textwidth]{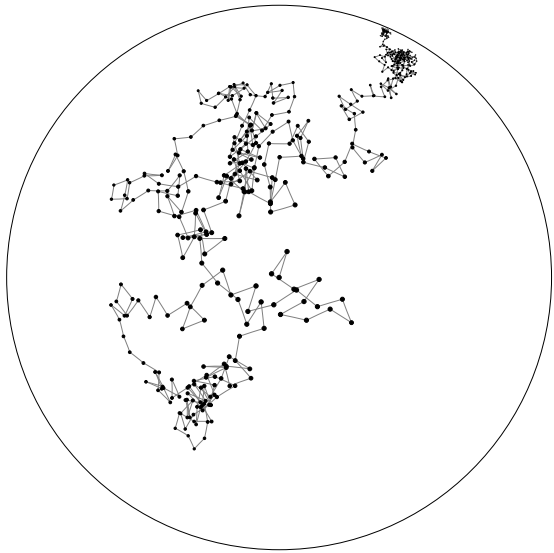}
\caption{An \(r\)-geodesic random walk in the Poincaré disk model of the hyperbolic plane.  Almost sure convergence to a boundary point was shown in \cite[Theorem 5.2]{furstenberg}.}
\end{center}
\end{figure}

If \(M\) is compact, then under very weak conditions on \(r\) (see \cite{sunada1978}, \cite{sunada1981}, \cite{sunada1983}), the volume measure is the unique stationary measure for the \(r\)-geodesic random walk.  
This suggests that performing a large number of steps of a geodesic random walk is a reasonable way of sampling a uniform point from the manifold (compare with \cite[Example 3C: How not to sample]{diaconis-shahshahani2013}).

Also, this motivates the issue of studying the ergodicity and mixing rates of geodesic random walks on compact Riemannian manifolds.

The Markov operator associated to the \(r\)-geodesic random walk is the \(r\)-spherical mean operator defined by
\[(L_rf)(x) = \int\limits_{T^1_xM}f(\exp_x(rv))dv\]
where \(T^1_xM\) is the unit tangent sphere at \(x\), \(\exp_x\) the exponential mapping, and integration is with respect to the rotationally invariant probability on the sphere.

The spherical mean operator has a self-adjoint continuous extension on \(L^2(M)\) with \(\|L_r\| \le 1\) (see \cite[Theorem A]{sunada1981}).

For small enough \(r\) the spherical mean operator is a Fourier integral operator of negative order (see \cite{tsujishita1976}).
This implies that if \(M\) is compact there is an orthonormal basis of \(L^2(M)\) consisting of eigenfunctions of \(L_r\), and all eigenfunctions of \(L_r\) are \(C^\infty\).

The conclusion was shown to hold for all \(r > 0\) such that the exponential map is an immersion on spheres of radius \(r\) by Sunada (see \cite[Theorem A]{sunada1981})

Regardless of whether \(M\) is compact or not, in some situations (e.g. \(M\) covers a compact manifold and \(r > 0\) is small enough) Tsujishita's result implies that, given \(k > 0\), the distribution of the \(n\)-th step of any geodesic random walk has a \(C^k\) density with respect to the volume measure for all \(n\) large enough.

In the case of compact symmetric spaces the spectrum of \(L_r\) has been determined explicitely (see \cite{pati-shahshahani-sitaram1995}).  

The mixing time for \(r\)-geodesic random walks on compact manifolds with strictly positive curvature has been studied in \cite{mangoubi-smith2018}.

In what follows we concentrate on the case where \((M,g)\) has non-positive sectional curvature.
Under this hypothesis, we will study a mapping \(\varphi: (T_x^1M)^n \to M\) first defined by Sunada (see \cite{sunada1983}) whose behavior is associated to that of the spherical mean operator and geodesic random walk.
We will show that \(\varphi\) has a nice structure from the point of view of singularity theory, and discuss some consequences for the spherical mean operator and geodesic random walk.

\subsection{Sunada's mapping \label{sunadamapping}}

On a complete Riemannian manifold \((M,g)\) fix a point \(x \in M\), a positive distance \(r > 0\), and a positive integer \(n \in \N\).  

Given \(v = (v_1,\ldots, v_n) \in (T_x^1M)^n\) we will define a piecewise geodesic \(\alpha_v\) parametrized by arc length and starting at \(x\).  The endpoint of \(\alpha_v\) will be the image of \(v\) by Sunada's mapping \(\varphi\).

To begin we set \(\alpha_v(t) = \exp_{x}(tv_1)\) for \(t \in [0,r]\) and let \(v_i(t)\) be the parallel transport of \(v_i = v_i(0)\) along \(\alpha_v\).   
Continuing inductively, for \(k = 1,\ldots,n-1\) and \(t \in [0,r]\) we define \(\alpha_v(kr+t) = \exp_{\alpha(kr)}(tv_{k+1}(kr))\) and extend the \(v_i(t)\) so they continue to be parallel along \(\alpha_v\).

With these definitions set \(\varphi(v) = \alpha_v(nr)\).

Notice that
\[(L_r^nf)(x) = \int\limits_{(T^1_xM)^n} f(\varphi(v_1,\ldots,v_n)) dv_1\cdots dv_n\]
where integration is with respect to the product \(m_n\) of \(n\) copies of the rotationaly invariant probability on \(T_x^1M\).

Also, the distribution of the \(n\)-th step of an \(r\)-geodesic random walk starting at \(x\) is the push-forward \(\varphi_* m_n\) of \(m_n\) under \(\varphi\).

\begin{figure}[h]
\begin{center}
\includegraphics[width=0.4\textwidth]{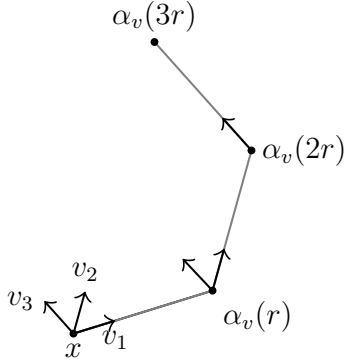}
\caption{The broken geodesic \(\alpha_v\) for \(v = (v_1,v_2,v_3)\).  Here \(n = 3\) and by definition \(\varphi(v) = \alpha_v(3r)\)}
\end{center}
\end{figure}

\subsection{Fold maps}

We recall the definition and basic properties of submersions with folds or fold maps, which are a generalization of Morse functions where the dimension of the codomain can be larger than one, see \cite[Chapter 3.4]{golubitsky-guillemin}.

Let \(f:X \to Y\) be a smooth mapping between smooth manifolds \(X\) and \(Y\).

Recall that \(f\) is a submersion if the tangent map \(D_xf: T_xX \to T_{f(x)}Y\) is surjective for all \(x \in X\) and an immersion if the tangent map is injective at all points.

We assume from now on that the dimension of \(X\) is greater than or equal to that of \(Y\).

We denote by \(S(f)\) the singular set of \(f\) i.e. the set at which \(D_xf\) is not surjective, and \(S_k(f)\) the set of points at which \(D_xf\) has corank \(k\).

Recall that \(f\) is a fold maps if \(S(f) = S_1(f)\), one has \(j^1f \pitchfork S_1\) (here \(j^1f\) is the first order jet of \(f\), and \(S_1\) is the set of co-rank one jets), and at each \(x \in S_1(f)\) one has \(T_{x}S_1(f) + \text{Ker}(D_xf) = T_xX\).

In the case when \(Y = \R\) fold maps are Morse functions.   

When \(f\) is a fold map the singular set is an embedded submanifold of \(X\) and the restriction of \(f\) to this submanifold is an immersion.

Conditions for the existence of a fold map between two manifolds have been studied for example in \cite{saeki}, \cite{ando}, and \cite{sadykov}.

\subsection{Statements}

\subsubsection{Singularities of Sunada's mapping and consequences}

We now state our main result.   Notice that for even \(n\) for all \(v \in T_x^1M\) and all choices of signs \(\sigma_i \in \lbrace -1,1\rbrace\) with \(\sum \sigma_i = 0\) one has \(\varphi(\sigma_1v,\ldots,\sigma_nv) = x\).   
This shows that \(\varphi\) is not an immersion on its singular set, so the second part of the conclusion below does not hold for even \(n\).

\begin{theorem}[Main theorem]\label{maintheorem}
Let \((M,g)\) be a complete Riemannian manifold with non-positive curvature.  Fix \(x \in M\), \(r > 0\), \(n \in \N\), and let \(\varphi\) be defined as above.

Then \(j^1\varphi \pitchfork S_1\) and, if \(n\) is odd, \(\varphi\) is a fold map.
\end{theorem}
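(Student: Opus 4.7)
My plan is to compute $D_v\varphi$ as the endpoint of a piecewise Jacobi field along the broken geodesic $\alpha_v$, and then exploit the absence of conjugate points and the convexity of Jacobi fields in non-positive curvature. For a tangent vector $w=(w_1,\ldots,w_n)$ at $v=(v_1,\ldots,v_n)$ with $w_i\in v_i^\perp$, I would first establish the formula $D_v\varphi(w)=J(nr)$, where $J$ is the continuous piecewise-smooth vector field along $\alpha_v$ that is Jacobi on each segment $[(k-1)r,kr]$, satisfies $J(0)=0$ and $J'(0)=w_1$, and has a specified jump of $J'$ at each corner $t=kr$. The jump is obtained by differentiating in $s$ the identity that the velocity discontinuity of $\alpha_{v+sw}$ at its $k$-th corner equals the parallel transport of $v_{k+1}+sw_{k+1}-v_k-sw_k$ along the preceding segments, and involves the parallel-transported $w_{k+1}-w_k$ plus a curvature term reflecting the variation of parallel transport.

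Given this formula, the inclusion $S(\varphi)=S_1(\varphi)$ is immediate: setting $w_1=\cdots=w_{n-1}=0$ kills $J$ on $[0,(n-1)r]$ and reduces it on the last segment to the Jacobi field with $J((n-1)r)=0$ and $J'((n-1)r)$ equal to the parallel transport of $w_n\in v_n^\perp$. By the Gauss lemma, combined with the fact that $\exp$ is non-singular in non-positive curvature, $\{J(nr):w_n\in v_n^\perp\}$ is the full hyperplane $e^\perp\subset T_{\varphi(v)}M$, where $e:=\dot\alpha_v(nr)$. Hence $\operatorname{Im}(D_v\varphi)\supseteq e^\perp$ everywhere, so the corank is at most one. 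The singular locus is then cut out by the single scalar condition that $w\mapsto\langle J(nr),e\rangle$ vanish identically, and $j^1\varphi\pitchfork S_1$ amounts to non-degeneracy of the intrinsic Hessian obtained by differentiating this functional in $v$. I would compute this Hessian through the index form of $\alpha_v$ (second variation) and use the non-positive-curvature inequality $\|J\|''\ge 0$ for Jacobi fields with $J(0)=0$ to conclude the required non-degeneracy regardless of the parity of $n$.

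For odd $n$, the remaining fold condition $\ker D_v\varphi\cap T_v S_1(\varphi)=\{0\}$, equivalently that $\varphi|_{S_1(\varphi)}$ is an immersion, must be verified. The pathological configurations $(\sigma_1v,\ldots,\sigma_nv)$ with $\sigma_i\in\{\pm1\}$ and $\sum\sigma_i=0$ highlighted in the remark preceding the theorem, which violate the fold condition, exist only when $n$ is even; for odd $n$ they are absent. I would characterize $\ker D_v\varphi$ explicitly as $\{w:J(nr)=0\}$ and verify, using the convexity of Jacobi-field norms in the absence of conjugate points, that no direction tangent to $S_1(\varphi)$ can lie in the kernel. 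I expect the main obstacle to be the Hessian and transversality analysis: carefully bookkeeping the piecewise Jacobi field with its corner jumps, and extracting strict non-degeneracy of the intrinsic Hessian from the index form, requires a Gauss-lemma decomposition into radial and transverse components along each segment together with the crucial fact that in non-positive curvature Jacobi fields with $J(0)=0$ cannot vanish again.
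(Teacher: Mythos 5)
Your first-order analysis follows essentially the same route as the paper: the corank bound $S(\varphi)=S_1(\varphi)$ via the last-segment Jacobi field with $J((n-1)r)=0$, the Gauss lemma to see that the image of $D_v\varphi$ always contains $\dot\alpha_v(nr)^{\perp}$, and the identification of the components of the singular set together with the observation that $\varphi$ restricted to such a component is $u\mapsto\exp_x\bigl(r(\sum\sigma_i)u\bigr)$, an immersion when $\sum\sigma_i\neq 0$ because there are no conjugate points. (You should still supply the explicit characterization $S(\varphi)=\{(\pm v_0,\ldots,\pm v_0)\}$, which the paper extracts from the first variation of energy across each corner; you need it before you can speak of ``directions tangent to $S_1(\varphi)$.'')

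The genuine gap is in the central step, the transversality $j^1\varphi\pitchfork S_1$. You propose to get non-degeneracy of the intrinsic Hessian from the index form together with the convexity inequality $\|J\|''\ge 0$. This cannot work as stated, for two reasons. First, the inequality is not strict (already in $\mathbb{R}^d$ one has $\|J\|''=0$ along radial directions), and non-degeneracy is a strict, two-sided statement. Second, and more fundamentally, at a critical point $(\sigma_1v_0,\ldots,\sigma_nv_0)$ with mixed signs the Hessian of the component of $\varphi$ along $\gamma'$ is genuinely \emph{indefinite} -- in the Euclidean model $f(v_1,\ldots,v_n)=\sum v_i$ it equals $-\sum\sigma_i\|w_i\|^2$ -- so no one-sided convexity argument can detect its non-degeneracy. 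The paper's essential idea, absent from your sketch, is to split the tangent space at the critical point as $T_vV_p\oplus T_vV_q$, where $V_p$ and $V_q$ are the transverse submanifolds of configurations that follow $-\nabla f_p$ (respectively $-\nabla f_q$) on the segments with $\sigma_{k+1}=-1$ (respectively $+1$), for $p,q$ far out on the geodesic through $v_0$; Toponogov's comparison with a space of constant curvature $-a^2$ (a lower curvature bound on a compact ball) then yields the \emph{strict} inequality $\partial_s^2 f_p(\alpha_{w(s)}(nr))\vert_{s=0}<0$ along non-constant geodesics of $V_p$, and symmetrically for $V_q$. Negative definiteness on $T_vV_p$ plus positive definiteness on the complementary $T_vV_q$ forces zero nullity. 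Without this decomposition, or some equivalent device producing strict second-order estimates of both signs, your argument does not close; note also that the paper proves the transversality for \emph{all} $n$, and parity enters only through the immersion condition on the singular components.
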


\begin{remark}
 The statement above follows from the special case where \(M\) is simply connected by considering the universal covering space.
\end{remark}

The simplest case of the theorem above is the following:
\begin{example}
 Let \(S^{d-1} = \lbrace v \in \R^d: \|v\| = 1\rbrace\) be the unit sphere in \(\R^d\).    The mapping \(f:(S^{d-1})^n \mapsto \R^d\) given by 
 \[f(v_1,\ldots,v_n) = v_1+\cdots + v_n,\]
 is a fold mapping if \(n\) is odd.
\end{example}

As an immediate consequence we obtain (by a different proof, only in the case of non-positive curvature, for all \(r > 0\)) the following consequence of the results in \cite{tsujishita1976}.

\begin{corollary}\label{regularitycorollary}
In the context of theorem \ref{maintheorem}, if \(M\) is \(d\)-dimensional and \(n\) is odd, then the the distribution of the \(n\)-th step of a geodesic random walk on \((M,g)\) has \(C^k\)-density for \(k = (n-1)(d-1)/2 - 2\).
\end{corollary}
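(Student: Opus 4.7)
The plan is to deduce the regularity of $\varphi_* m_n$ from the fold normal form provided by Theorem \ref{maintheorem}. The source $(T_x^1 M)^n$ has dimension $m = n(d-1)$ and the target $M$ has dimension $d$, so at each singular point the kernel of $D\varphi$ has dimension $k = m - d + 1 = (n-1)(d-1)$, which is even because $n$ is odd. Away from $S_1(\varphi)$ the map is a submersion and the coarea formula yields a $C^\infty$ contribution to the pushforward. Using a smooth partition of unity on $(T_x^1 M)^n$, the problem reduces to estimating the regularity of $\varphi_*(\rho\, dx)$ for a smooth density $\rho$ compactly supported in a neighborhood of a single fold point.

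By the Morse lemma for folds (\cite[Chapter 3.4]{golubitsky-guillemin}), one can choose local coordinates on source and target so that
\[
\varphi(x_1, \dots, x_m) = \bigl(x_1, \dots, x_{d-1}, Q(x_d, \dots, x_m)\bigr)
\]
for some non-degenerate quadratic form $Q$ on $\R^k$. The first $d-1$ target variables contribute smoothly to the pushforward, so the regularity question at a point of $\varphi(S_1(\varphi))$ collapses to that of the pushforward of a smooth compactly supported density on $\R^k$ under $Q$, at the critical value $0 \in \R$.

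This final one-dimensional density can be computed explicitly: after diagonalizing $Q$ to signature $(p, k-p)$ by Sylvester's law and passing to polar coordinates in the two eigenspaces, one obtains an integral whose integrand acquires a factor of $w^{k - 3 - 2j}$ in the angular variable after $j$ differentiations in the parameter. The integral remains absolutely convergent for $j \le k/2 - 2$, while for $j = k/2 - 1$ a logarithmic singularity appears at $w = 0$. In the positive definite case this is also visible from the fact that the density behaves like $c\, t_+^{k/2 - 1}$ near $0$, which is $C^{k/2-2}$ but not $C^{k/2-1}$ because $k/2 - 1$ is a non-negative integer. Reassembling the local estimates via the partition of unity produces the claimed $C^{(n-1)(d-1)/2 - 2}$ density of $\varphi_* m_n$ with respect to the volume on $M$. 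The main obstacle is the explicit analysis of the one-dimensional density when $Q$ is indefinite: both sides of the critical value receive contributions, and one must verify that integration near the singular locus of $Q$ at the origin produces an honest $C^{(n-1)(d-1)/2 - 2}$ function rather than something worse.
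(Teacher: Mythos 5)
Your reduction is the same as the paper's: both arguments invoke the local normal form for folds to write \(\varphi\) near each singular point as \((x_1,\dots,x_m)\mapsto(x_1,\dots,x_{d-1},Q(x_d,\dots,x_m))\) with \(Q\) a non-degenerate quadratic form on \(\R^{(n-1)(d-1)}\), and both collapse the corollary to the regularity, at the critical value, of the pushforward of a smooth density on \(\R^{(n-1)(d-1)}\) under \(Q\). The divergence --- and the gap --- is in how that one-dimensional problem is finished. You propose a direct polar-coordinate computation and then explicitly leave the indefinite case unverified, calling it ``the main obstacle.'' But the indefinite case is unavoidable here: by corollary \ref{accelerationcorollary} the second derivative of the \(\gamma'\)-component of \(\varphi\) has opposite signs along the \(V_p\)- and \(V_q\)-directions, and both families have positive dimension on every critical component whose signs \(\sigma_i\) are not all equal, so for \(n\ge 3\) most fold points carry an indefinite \(Q\). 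Your definite-case sanity check \(c\,t_+^{k/2-1}\) therefore does not cover the situations the estimate is actually needed for, and the counting ``a factor \(w^{k-3-2j}\) after \(j\) differentiations'' is set up for a single radial variable, whereas the indefinite pushforward is a two-radial-variable integral whose \(t\)-derivatives concentrate along the cone \(\{|u|=|v|\}\) rather than at a point. As written, the proof stops exactly at the step the exponent \((n-1)(d-1)/2-2\) depends on.

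The paper closes this step with a short Fourier-transform argument that treats all signatures at once: replacing the smooth density by a standard Gaussian (which has the same local regularity behaviour), the pushforward under \(Q\) is the law of \(Z=X-Y\) with \(X,Y\) independent \(\chi^2\) variables of parameters \(a\) and \(b\), \(a+b=(n-1)(d-1)\), whose characteristic function \((1-2it)^{-a/2}(1+2it)^{-b/2}\) decays like \(|t|^{-(n-1)(d-1)/2}\); this yields a \(C^{(n-1)(d-1)/2-2}\) density uniformly in \((a,b)\). If you want to keep your real-variable route you must actually carry out the indefinite estimate --- for instance write the fibre integral in \(r=|u|\), \(s=|v|\), substitute \(a=r^2\), \(b=s^2\) to get \(\int_0^\infty \hat\rho(b+t,b)\,(b+t)^{p/2-1}b^{(k-p)/2-1}\,db\) for \(t>0\) and its mirror for \(t<0\), and count derivatives in \(t\) honestly near \(b=0\) and \(b=-t\) --- or else substitute the Fourier computation above.
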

\begin{proof}
 Recall that the distribution in question is the push-forward under \(\varphi\) of the product measure on \((T_x^1M)^n\).

 By the local form of submersions with folds (see \cite[Theorem 4.5]{golubitsky-guillemin}), for each singular point there is a system of coordinates at the point and its image such that \(\varphi\) takes the form 
 \[(x_1,\ldots,x_{n(d-1)}) \mapsto (x_1,\ldots,x_{d-1}, \pm x_d^2 \pm x_{d+1}^2 \pm \cdots \pm x_{n(d-1)}^2)\]
 
 Hence the push-forward measure has the same regularity as the push-forward of measure in \(\R^{(n-1)(d-1)}\) with a smooth density to \(\R\) under the non-degenerate quadratic form in the last coordinate above.
 
 In particular, the regularity is that of the push-forward of a standard Gaussian measure in \(\R^{(n-1)(d-1)}\) under the quadratic form.
 Grouping the positive and negative terms one obtains that this is the distribution of \(Z = X-Y\) where \(X\) and \(Y\) are independent \(\chi^2\) random variables with parameters \(a\) and \(b\) (corresponding to the number of positive and negative signs in the quadratic form) respectively such that \(a+b = (n-1)(d-1)\).

 We now calculate the Fourier transform of the distribution and obtain
 \[\E{e^{itZ}} = \E{e^{itX}}\E{e^{-itY}} = (1-2it)^{-a/2}(1+2it)^{-b/2}.\]
 
 Since the Fourier transform decays like \(|t|^{-(a+b)/2} = |t|^{-(n-1)(d-1)/2}\) one obtains that the density is at least \(C^k\) where \(k = (n-1)(d-1)/2 - 2\)
\end{proof}

We have the following consequence for the \(r\)-geodesic random walk.
\begin{remark}\label{densitycorollary}
 In the context of theorem \ref{maintheorem}, let \(p_n(x,\cdot)\) be the probability density of the distribution of the \(n\)-th step of an \(r\)-geodesic random walk starting at \(x\).
 
 Then, \(p_n: M \times M \to [0,+\infty)\) has the same regularity as \(p_n(x,\cdot)\).
\end{remark}
\begin{proof}
 In a neighborhood \(U\) of any given \(x \in M\) the unit sphere bundle \(T^1M\) is \(C^\infty\) diffeomorphic to \(U \times S^{d-1}\) where \(S^{d-1}\) is the unit sphere in \(\R^d\).
 
 For fixed \(n\) and \(r > 0\), under this identification the mapping \((x,v) \mapsto \varphi_x(v)\) is smooth.

 This implies that \(p_n: M \times M \to [0,+\infty)\) has the same regularity as \(p_n(x,\cdot)\) as claimed.
\end{proof}

We also re-obtain, with a geometric proof, the following result from \cite[Theorem A]{sunada1981}.

\begin{corollary}
 In the context of theorem \ref{maintheorem}, if \((M,g)\) is compact then for all \(r\), \(L_r\) diagonalizable in \(L^2(M)\) and its eigenfunctions are \(C^\infty\).
\end{corollary}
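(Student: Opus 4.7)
The plan is to use Corollary \ref{regularitycorollary} together with Remark \ref{densitycorollary} to show that a sufficiently high odd power of \(L_r\) is a compact integral operator with smooth kernel, and then to transfer the resulting spectral decomposition from \(L_r^n\) to \(L_r\) itself using self-adjointness.

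First I would fix an odd integer \(n\) with \(k = (n-1)(d-1)/2 - 2\) at least one (and, eventually, arbitrarily large). By Corollary \ref{regularitycorollary}, for each \(x \in M\) the \(n\)-step transition distribution of the \(r\)-geodesic random walk starting at \(x\) admits a density \(p_n(x,\cdot) \in C^k(M)\); by Remark \ref{densitycorollary} the joint function \(p_n : M \times M \to [0,+\infty)\) is then \(C^k\). Since \(M\) is compact, \(p_n\) is bounded and continuous, so
\[(L_r^n f)(x) = \int_M p_n(x,y) f(y)\, dy\]
defines a Hilbert--Schmidt, hence compact, self-adjoint operator on \(L^2(M)\). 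The classical spectral theorem for compact self-adjoint operators then provides a complete orthonormal basis of eigenfunctions of \(L_r^n\).

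Next I would upgrade this to a spectral decomposition of \(L_r\). Because \(L_r\) is bounded and self-adjoint with \(\|L_r\| \leq 1\), the spectral theorem gives \(L_r = \int_{-1}^{1} \lambda\, dE_\lambda\), and the corresponding decomposition \(L_r^n = \int \lambda^n\, dE_\lambda\) coincides with the spectral decomposition of the compact operator \(L_r^n\). For any \(\varepsilon > 0\) the projection \(E(\{|\lambda| \geq \varepsilon\})\) equals \(E(\{|\lambda^n| \geq \varepsilon^n\})\), which is finite rank by compactness of \(L_r^n\). It follows that the spectrum of \(L_r\) outside every neighbourhood of \(0\) is finite and consists of eigenvalues of finite multiplicity, so \(L^2(M)\) decomposes as an orthogonal sum of eigenspaces of \(L_r\) (with possibly an additional summand \(\ker(L_r)\)).

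Finally, for an eigenfunction \(f\) with nonzero eigenvalue \(\lambda\), the identity \(f = \lambda^{-n} L_r^n f\) together with the \(C^k\) regularity of \(p_n\) yields \(f \in C^k(M)\); letting \(n \to \infty\) through odd integers makes \(k\) arbitrarily large and so \(f \in C^\infty(M)\). I do not anticipate a real obstacle: the geometric content has been absorbed into Corollary \ref{regularitycorollary}, and the remaining step is the routine deduction of the spectral decomposition of a bounded self-adjoint operator from the compactness of one of its powers.
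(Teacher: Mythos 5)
Your proposal is correct and follows essentially the same route as the paper: both use the \(C^k\) regularity of the kernel \(p_n(x,y)\) (via Corollary \ref{regularitycorollary} and Remark \ref{densitycorollary}) to make a high odd power \(L_r^n\) compact, then invoke self-adjointness to diagonalize, and finally bootstrap smoothness of eigenfunctions from \(f=\lambda^{-n}L_r^nf\). The only cosmetic difference is that the paper asserts compactness of \(L_r\) itself (which for a self-adjoint operator does follow from compactness of a power), while you transfer the spectral decomposition through the spectral measure of \(L_r\); both arguments share the implicit restriction to eigenvalues \(\lambda\neq 0\) in the regularity step.
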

\begin{proof} 
 Given \(k\), by remark \ref{densitycorollary} for \(n\) large enough \(L_r^n\) is continuous mapping from \(L^2(M)\) to \(C^k(M)\).
 
 If follows that \(L_r\) is compact and therefore diagonalizable.
 
 Furthermore, given any eigenfunction \(f\) with eigenvalue \(\lambda\) one has \(f = \lambda^{-n}L_r^nf\) for all \(n\) so \(f\) is \(C^\infty\). 
\end{proof}

\section{Structure of the singular set}

We first discuss the structure of the singular set of \(\varphi\).

\begin{lemma}\label{criticalsetlemma}
 In the context of theorem \ref{maintheorem} the following holds:  
 \begin{enumerate}
  \item At all critical points the tangent map has corank one, i.e. \(S(\varphi) = S_1(\varphi).\)
  \item The set of singular points is \(S(\varphi) = \lbrace (v_1,\ldots,v_n) \in (T_x^1M)^n: v_i = \pm v_1\text{ for all }i=2,\ldots,n\rbrace.\)
  \item If \((v_1,\ldots,v_n) \in S(\varphi)\) satisfies \(\sum\limits_{i = 0}^n v_i \neq 0\) then \(\varphi\) is an immersion when restricted to the the component of \(S(\varphi)\) containing \(v\).
 \end{enumerate}
\end{lemma}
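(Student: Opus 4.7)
My plan is to prove parts (1) and (2) jointly by induction on \(n\), and to deduce part (3) from the resulting explicit description of the singular set. The base case \(n = 1\) is immediate: \(\varphi(v_1) = \exp_x(r v_1)\) is an immersion from \(T^1_xM\) into \(M\) by the absence of conjugate points in non-positive curvature, so every point is critical with corank exactly one, the singular set is the entire sphere (vacuously satisfying the condition \(v_i = \pm v_1\) for \(i \geq 2\)), and the restriction of \(\varphi\) is already an immersion.

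For the inductive step, I would write \(\varphi(v) = \exp_y(r u)\), where \(y = \varphi_{n-1}(v_1,\ldots,v_{n-1})\) is the output of the \((n-1)\)-step Sunada map applied to \(v' = (v_1,\ldots,v_{n-1})\) and \(u = v_n((n-1)r) \in T_y M\) is the parallel transport of \(v_n\) along \(\alpha_v\). Freezing \(v'\), the map \(v_n \mapsto \exp_y(r v_n((n-1)r))\) is an immersion from \(T^1_xM\) into \(M\) whose derivative image is precisely \(\dot\alpha_v(nr)^\perp\) (Jacobi fields vanishing at \(t = 0\) with initial derivative orthogonal to \(\dot\gamma\) stay orthogonal for all \(t\)). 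Hence \(\mathrm{Im}(D_v\varphi)\) always contains a hyperplane, and the surjectivity question reduces to whether the radial direction \(\dot\alpha_v(nr)\) is also hit. Perturbing only \(v'\) yields a Jacobi field \(J_n\) along segment \(n\) with initial data \((J_n(0), J_n'(0)) = (Y, U)\), where \(Y = D_{v'}\varphi_{n-1}(w_1,\ldots,w_{n-1})\) and \(U = \nabla_s u_s|_{s=0}\); since \(\langle J_n, \dot\gamma\rangle\) is affine in \(t\),
\[\langle J_n(r), \dot\alpha_v(nr)\rangle = \langle Y, u\rangle + r\langle U, u\rangle.\]

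The crucial observation is that \(u_s\) remains a unit vector for all \(s\) (parallel transport preserves norm), so \(\langle U, u\rangle = 0\) and the radial component collapses to \(\langle Y, u\rangle\). Therefore \(D_v\varphi\) is surjective precisely when some \(Y\) in the image of \(D_{v'}\varphi_{n-1}\) satisfies \(\langle Y, u\rangle \neq 0\). By the inductive hypothesis this image is either all of \(T_y M\) (and then \(v\) is non-singular) or the hyperplane \(\dot\alpha_v((n-1)r)^\perp = v_{n-1}((n-1)r)^\perp\) in the singular case \(v_i = \pm v_1\) for \(i \leq n-1\); in the latter, \(u\) is orthogonal to the entire image iff \(u = \pm v_{n-1}((n-1)r)\), equivalently \(v_n = \pm v_{n-1} = \pm v_1\). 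Chaining the cases gives parts (1) and (2): \(S(\varphi) = S_1(\varphi) = \{v : v_i = \pm v_1 \text{ for all } i\}\), with \(\mathrm{Im}(D_v\varphi) = \dot\alpha_v(nr)^\perp\) at singular points. For part (3), each connected component of \(S(\varphi)\) is the image of an embedding \(S^{d-1} \to (T^1_xM)^n\) of the form \(w \mapsto (\epsilon_1 w, \ldots, \epsilon_n w)\) for a choice of signs (up to global flip); unwinding Sunada's construction shows that along such a component the broken geodesic stays on the single line \(t \mapsto \exp_x(tw)\), so \(\varphi(v) = \exp_x(krw)\) with \(k = \sum_i \epsilon_i\). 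The hypothesis \(\sum v_i \neq 0\) forces \(k \neq 0\), and then \(w \mapsto \exp_x(krw)\) is an immersion of \(S^{d-1}\) by the no-conjugate-points property.

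The main obstacle I anticipate is controlling how parallel transport varies along varying piecewise geodesics; the identity \(\langle U, u\rangle = 0\) from norm preservation is exactly what sidesteps the explicit curvature correction to \(U\) and reduces the whole analysis, at each level of the induction, to a one-step Jacobi field computation.
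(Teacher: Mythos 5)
Your proof is correct. Parts (1) and (3) coincide with the paper's argument: perturbing only the last coordinate gives Jacobi fields on the final segment vanishing at its start with orthogonal initial derivative, so absence of conjugate points forces the image of \(D_v\varphi\) to contain the hyperplane \(\dot\alpha_v(nr)^\perp\); and on a component of \(S(\varphi)\) the broken geodesic collapses onto a single geodesic, so \(\varphi\) restricts to \(v_0\mapsto\exp_x(Rv_0)\) with \(R\neq 0\). For part (2) you take a genuinely different, and arguably cleaner, route. The paper argues directly on the full map: it uses the first variation of energy to get \(g(J(kr),\alpha'(kr^+))=g(J((k+1)r),\alpha'((k+1)r^-))\) on each segment, locates the \emph{last} index \(k\) with \(v_k\neq\pm v_{k+1}\), perturbs only \(v_k\) to create a Jacobi field with nonzero component along \(\alpha'((k+1)r^+)\), and propagates that component forward to \(t=nr\) (the hypothesis \(v_l=\pm v_n\) for \(l\ge k+1\) is what keeps the component from being lost at later breaks). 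You instead induct on \(n\): your identity \(\langle J_n(r),\dot\alpha_v(nr)\rangle=\langle Y,u\rangle+r\langle U,u\rangle\) with \(\langle U,u\rangle=0\) is exactly the paper's first-variation identity for the last segment (both express that the variation is through unit-speed geodesics of fixed length \(r\)), but packaging the remainder as ``\(D_v\varphi\) is onto iff \(u\) is not orthogonal to \(\mathrm{Im}(D_{v'}\varphi_{n-1})\)'' lets the inductive hypothesis do the forward propagation automatically and eliminates the case analysis over where the break occurs. The only bookkeeping point to state explicitly is that your induction carries the auxiliary claim \(\mathrm{Im}(D_{v'}\varphi_{n-1})=\dot\alpha_{v'}((n-1)r)^\perp\) at singular points of the \((n-1)\)-step map; this follows at once from corank one together with the hyperplane containment, so there is no gap.
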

\begin{proof}
 Given any \(v = (v_1,\ldots,v_n) \in (T_x^1M)^n\) define \(\alpha_v\) as in section \ref{sunadamapping}.
 During the proof we will consider curves \(w(s)\) with \(w(0) = v\), the associated the variation \(\gamma_s(t) = \alpha_{w(s)}(t)\), and corresponding Jacobi field \(J(t) = \partial_s \gamma_s(t)_{\vert s = 0}\). 
 
 To establish the first claim we consider variations for which  \(w'(0) = (0,\ldots,0,\dot{w})\) with \(\dot{w} \neq 0\).
 
 Notice that \(\gamma_s(t)\) restricted to \([a,b]\) with \(a = (n-1)r\) and \(b = nr\) is a geodesic variation.  Furthermore \(J(a) = 0\) and \(J'(a)\) is the parallel transport of \(\dot{w}\) along \(\alpha_v\) to \(\alpha_v(a)\).
 
 Since \(M\) has no conjugate points it follows that \(\dot{w} \mapsto J(b) = D_v\varphi v'(0)\) is injective.   This shows that the rank of \(D_v\varphi\) is at least \(d-1\) so \(S(\varphi) = S_1(\varphi)\) as claimed.
 
 Notice that we have also shown that the image of \(D_v\varphi\) always contains the space of vectors in \(T_{\varphi(v)}M\) perpendicular to \(\alpha'(nr)\). 
 
 To establish the second claim notice that for if \(a = kr\) and \(b = (k+1)r\) then, since \(\gamma_s\) is a geodesic of constant length on \([a,b]\), the first variation of energy applied to \(\gamma_s\) on \([a,b]\) yields
 \[g(J(a),\gamma_0'(a)) = g(J(b),\gamma_0'(b)).\]
 
 Assume there is \(k \in \lbrace 1,\ldots,n-1\rbrace\) such that \(v_k \neq \pm v_{k+1}\) and \(v_l = \pm v_n\) for all \(l \ge k+1\).
 
 Considering a curve with \(w'(0) = (\dot{w_1},\ldots,\dot{w_n})\) where \(\dot{w_i} \neq 0\) only for \(i = k\), notice that \(J((k+1)r)\) is non-zero and perpendicular to \(\alpha'((k+1)r^-)\).
 
 This implies that \(g(J((k+1)r),\alpha'((k+1)r^+)) \neq 0\) so by the first variation formula applied to the segment \([(k+1)r,nr]\) we have \(g(J(nr),\alpha'(nr)) \neq 0\).   
 Since \(J(nr) = D_v\varphi w'(0)\), and the image of \(D_v\varphi\) contains a vector not perpendicular to \(\alpha'(nr)\) and therefore \(D_v\varphi\) is surjective.
 
 It follows that if \(v \in S(\varphi)\) then \(v = (\pm v_0, \ldots, \pm v_0)\) as claimed.
 
 For the last claim we consider fixed signs \(\sigma_1,\ldots,\sigma_n \in \lbrace -1,+1\rbrace\), and a component of \(S(\varphi)\) of the form \(C = \lbrace v = (\sigma_1 v_0,\ldots,\sigma_n v_0): v_0 \in T_x^1M\rbrace\).
 
 Notice that, by hypothesis, \(R=  \left(\sum\limits_{i = 1}^n \sigma_i\right)r \neq 0\), and for \(v\) as above \(\varphi(v) = \exp_x(Rv_0)\).    Once again, since \(M\) has no conjugate points we obtain that \(\varphi\) restricted to \(C\) is an immersion on this set. 
\end{proof}

\section{Second order analysis near the singular set}

 Assume from now on, without loss of generality, that \(M\) is simply connected.
 
 Fix a vector \(v_0 \in T_x^1M\) and a choice of signs \(\sigma_1,\ldots,\sigma_n \in \lbrace -1, 1\rbrace\), and let \(v = (\sigma_1 v_0,\ldots, \sigma_n v_n)\).
 Notice that by lemma \ref{criticalsetlemma}, \(v\) is a critical point of \(\varphi\), and all critical points have this form.
 
 Set \(p = \exp(-2nrv_0)\), \(q = \exp(2nrv_0)\), and denote by \(f_p(x) = \dist(p,x)\) and \(f_q(x) = \dist(q,x)\) the distance functions to \(p\) and \(q\) respectively.
 Notice that \(f_p\) and \(f_q\) are smooth submersions when restricted to the image of \(\varphi\).
 
 Let \(V_p\) be the subset of \((T_x^1M)^n\) whose elements are those \(w = (w_1,\ldots,w_n)\) such that \(\alpha_{w}' = -\nabla f_p\) on \([kr,(k+1)r]\) for all \(k\) such that \(\sigma_{k+1} = -1\).

 Similarly let \(V_q\) be the set of elements satisfying \(\alpha_{w}' = -\nabla f_q\) on \([kr,(k+1)r]\) for all \(k\) such that \(\sigma_{k+1} = 1\).

 Notice that \(V_p\) and \(V_q\) are smooth submanifolds of \((T_x^1M)^n\) with complementary dimension, and which intersect transversally at \(v\).
 
 Concretely, an element in \(V_p\) is determined by the subset of coordinates corresponding to indices with \(\sigma_{i} = 1\) and elements of \(V_q\) are determined by the complementary set of coordinates.
 
  We endow \(V_p\) with the Riemannian metric coming from the coordinates corresponding to indices with \(\sigma_i = 1\).
  That is, a curve \(v(s) = (v_1(s),\ldots,v_n(s)) \in V_p\) is a geodesic if and only if each \(v_i(s)\) moves with constant speed along a great circle of \(T_x^1M\) for all \(i\) such that \(\sigma_i = 1\).

  Fix the Riemannian metric on \(V_q\) in the same manner using the complementary set of coordinates.

  \begin{figure}
 \includegraphics[width=\textwidth]{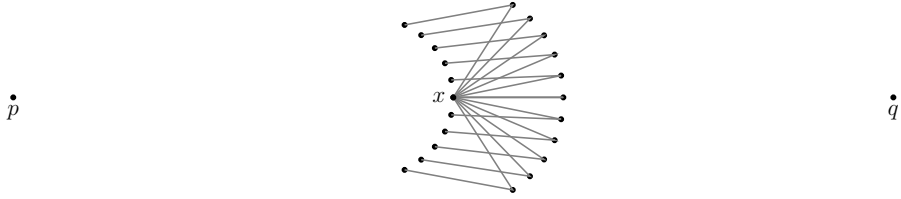}
\caption{A geodesic in in \(V_p\) in the Euclidean plane for \(n = 2\) and \(v = (v_0,-v_0)\).}
\end{figure}

\begin{figure}
 \includegraphics[width=\textwidth]{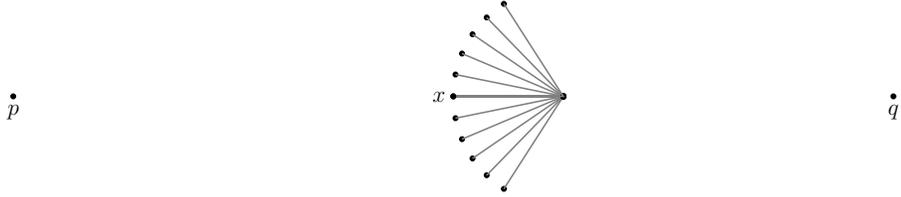}
\caption{A geodesic in \(V_q\) in the Euclidean plane for \(n = 2\) and \(v = (v_0,-v_0)\).}
\end{figure}

\begin{lemma}\label{acelerationlemma}
In the context above, if \(w(s)\) is a non-constant geodesic in \(V_p\) with \(w(0) = v\) then \(\partial_s^2 f_p(\alpha_{w(s)}(nr))_{\vert s = 0} < 0\).

Similarly, if \(w(s) = (w_1(s),\ldots,w_n(s))\) is a non-constant geodesic in \(V_q\) with \(w(0) = v\) then \(\partial_s^2 f_q(\alpha_{w(s)}(nr))_{\vert s = 0} < 0\).
\end{lemma}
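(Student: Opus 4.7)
The plan is to show \(F''(0) < 0\) where \(F(s) := f_p(\alpha_{w(s)}(nr))\), by a telescoping decomposition over the \(n\) broken geodesic segments, followed by a Jacobi field index form comparison to extract strictness. First, setting \(y_k(s) := \alpha_{w(s)}(kr)\), I would write \(F(s) = f_p(x) + \sum_{k=0}^{n-1} \Delta_k(s)\) with \(\Delta_k(s) := f_p(y_{k+1}(s)) - f_p(y_k(s))\). The definition of \(V_p\) forces \(\Delta_k \equiv -r\) whenever \(\sigma_{k+1} = -1\), and for \(\sigma_{k+1} = +1\) the triangle inequality along the length-\(r\) geodesic segment joining \(y_k(s)\) and \(y_{k+1}(s)\) gives \(\Delta_k(s) \le r = \Delta_k(0)\). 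Thus \(F(s) \le F(0)\), so \(F''(0) = \sum_{\sigma_{k+1} = +1} \Delta_k''(0)\) is a sum of non-positive terms, but this does not yet deliver strict negativity.

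To localize a strictly negative contribution I pick \(i_0\), the smallest index with \(\sigma_{i_0} = +1\) and \(w_{i_0}\) non-constant; this exists because tangent vectors to \(V_p\) are parametrized by the coordinates with \(\sigma_i = +1\) and \(w\) is non-constant. A short induction on \(i < i_0\) shows that all earlier \(w_i\) are constant (the free ones by minimality of \(i_0\), the constrained ones because the previous broken geodesic has not yet moved), so \(y_{i_0-1}\) is independent of \(s\) and \(y_{i_0}(s) = \exp_{y_{i_0-1}}(rW(s))\) for a non-constant great circle \(W(s)\) in the unit sphere at \(y_{i_0-1}\) with \(W(0) = \nabla f_p(y_{i_0-1})\). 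Applying the chain rule to \(\Delta_{i_0-1}(s) = f_p(y_{i_0}(s)) - f_p(y_{i_0-1})\), together with the Gauss formula for \(y_{i_0}(s)\) viewed as a curve on the geodesic sphere \(S_r(y_{i_0-1})\) whose outward normal at \(y_{i_0}(0)\) coincides with \(\nabla f_p(y_{i_0}(0))\), yields
\[
\Delta_{i_0-1}''(0) = \operatorname{Hess} f_p(J(r), J(r)) - \operatorname{Hess} f_{y_{i_0-1}}(J(r), J(r)),
\]
where \(J\) is the Jacobi field along the radial geodesic from \(y_{i_0-1}\) with \(J(0) = 0\) and \(J'(0) = W'(0) \neq 0\).

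The main obstacle is showing this difference of Hessians is strictly negative. I would invoke the index form description of the Hessian of a distance function: \(\operatorname{Hess} f_q(X,X) = I(Y, Y)\), where \(Y\) is the Jacobi field along the minimizing geodesic from \(q\) vanishing at \(q\) and taking the value \(X\) at the base point. Applied to \(f_p\) and \(f_{y_{i_0-1}}\) with \(X = J(r)\), both use Jacobi fields along the same ray from \(p\) to \(y_{i_0}(0)\); extending the shorter Jacobi field by zero on the subinterval \([p, y_{i_0-1}]\) produces a continuous piecewise smooth competitor for the index form on the full ray, with the same boundary values as the longer Jacobi field but achieving index form value \(\operatorname{Hess} f_{y_{i_0-1}}(J(r), J(r))\). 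The Cartan--Hadamard theorem rules out conjugate points, so the smooth Jacobi field uniquely minimizes the index form with the given boundary data, and the extended competitor differs from it (the competitor vanishes at \(y_{i_0-1}\) while the smooth Jacobi field, having terminal value \(J(r) \neq 0\), cannot vanish at an interior point). Hence the comparison is strict, \(\Delta_{i_0-1}''(0) < 0\), and combined with the non-positivity of the other \(\Delta_k''(0)\) one obtains \(F''(0) < 0\). The \(V_q\) statement is handled identically after swapping \(p \leftrightarrow q\) and interchanging the roles of the two signs.
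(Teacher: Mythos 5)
Your argument is correct, and its skeleton---telescoping \(f_p\) along the \(n\) segments, noting that the constrained steps contribute exactly \(-r\) while on the free steps \(f_p(y_{k+1}(s))-f_p(y_k(s))\) is maximized at \(s=0\) (hence has non-positive second derivative), and localizing to the first index \(i_0\) with \(\sigma_{i_0}=+1\) and \(w_{i_0}\) non-constant---is precisely the structure of the paper's proof, which phrases the same computation as the monotonicity of \(c_k=\partial_s^2 f_p(x_k(s))_{\vert s=0}\). Where you genuinely diverge is the strict-negativity step at \(i_0\). The paper picks a lower curvature bound \(-a^2\) on a compact ball, applies Toponogov's comparison to get an explicit majorant \(h(s)\) from the hyperbolic law of cosines, and checks \(h''(0)<0\) directly. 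You instead derive \(\Delta_{i_0-1}''(0)=\operatorname{Hess}f_p(J(r),J(r))-\operatorname{Hess}f_{y_{i_0-1}}(J(r),J(r))\) --- which is correct, since the tangential part of the acceleration of \(y_{i_0}(s)\) is orthogonal to \(\nabla f_p(y_{i_0}(0))\), and the latter coincides with the outward normal of the geodesic sphere because \(p\), \(y_{i_0-1}\), \(y_{i_0}(0)\) are aligned --- and then obtain strictness from the index lemma: the zero-extension of the Jacobi field for the near center is an admissible competitor for the index form of the far center, and it cannot coincide with the minimizing Jacobi field since a nontrivial Jacobi field vanishing at \(p\) cannot vanish again at \(y_{i_0-1}\) in the absence of conjugate points. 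Both routes are sound. Yours avoids choosing a curvature bound and an explicit model-space formula and uses only the absence of conjugate points (so it would transfer verbatim to manifolds without conjugate points), at the cost of invoking the Hessian-of-distance/index-form machinery; the paper's Toponogov computation is more elementary but more ad hoc. Two small points you should make explicit: that \(J(r)\neq 0\) (again by absence of conjugate points), which is what guarantees both that the competitor differs from the minimizer and that the Hessians are evaluated on a nonzero vector; and that \(W(0)=\nabla f_p(y_{i_0-1})\) because parallel transport along the back-and-forth broken geodesic preserves \(\gamma'\).
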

\begin{proof}
We will give the argument when \(w(s)\) is a geodesic in \(V_p\), notice that changing \(v_0\) to \(-v_0\) this implies the statement for \(V_q\) as well.  To simplify notation let \(f = f_p\).

Let \(x_k(s) = \alpha_{v(s)}(kr)\) and \(c_k = \partial_s^2 f(x_k(s))_{\vert s = 0}\) for \(k = 0,\ldots,n\).

Notice that \(c_0 = 0\).  We claim that \(c_{k}\) is non-increasing with \(k\).

To see this notice that if \(\sigma_{k+1} = -1\) then for all \(s\) one has \(f(x_{k+1}(s)) = f(x_k(s)) - r\) from which \(c_{k+1} = c_k\).

On the other hand, if \(\sigma_{k+1} = 1\) then \(f(x_{k+1}(s)) - f(x_k(s))\) attains it maximum possible value \(r\) at \(s = 0\).
This implies by taking the second derivatives at \(s = 0\) that \(c_{k+1} - c_k \le 0\) which establishes the claim.

Now let \(k = 0,1,\ldots,n-1\) be minimal such that \(\sigma_{k+1} = 1\) and \(w_{k+1}'(0) \neq 0\).   We claim that \(c_{k+1} < 0\).

Notice that combined with the previous claim this implies that \(c_n < 0\) which would complete the proof.

To establish the claim, notice that, because \(w(s)\) is a geodesic in \(V_p\), one has that \(x_i(s)\) is constant for all \(i \le k\).  In particular \(x_k(s) = x_k\) is constant.

Let \(R = \dist(p,x_k)\) and \(B\) be the closed ball centered at \(p\) with radius \(R+r\).   
Notice that \(x_{k+1}(s)\), being at distance \(r\) from \(x_k\), is in \(B\) for all \(s\).

Let \(-a^2\) be a lower bound for sectional curvature on \(B\).

By Toponogrov's triangle comparison theorem, comparing with the simply connected space of constant curvature \(-a^2\) we have
\[f(x_{k+1}(s)) \le h(s) = \acosh\left(a^{-1}\cosh(ar)\cosh(aR) - a^{-1}\sinh(ar)\sinh(aR)\cos(\alpha(s))\right),\]
where  \(\alpha(s)\) is the angle at \(x_k\) of the triangle with vertices \(x_{k+1}(s),x_k,\gamma(T)\).

Notice that \(f(x_{k+1}(0)) = R+r = h(0)\) and \(\partial_sf(x_{k+1}(s))_{\vert s = 0} = h'(0) = 0\).

Also, because \(w(s)\) is a geodesic and parallel transport from \(x\) to \(T_{x_{k}}M\) along \(\alpha_v\) is an isometry, we have \(\alpha(s) = \pi + cs\) for some constant \(c \neq 0\).

It follows that \(h''(0) < 0\) which implies \(\partial_s^2 f(x_{k+1}(s))_{\vert s = 0} < 0\) as required.
\end{proof}

The result above may be interpreted qualitatively as follows.
\begin{remark}
 The statement above for \(V_p\) implies that \(\alpha_{w(s)}(nr)\) is strictly inside the ball centered at \(p\) with radius \(\dist(p,\alpha_{v}(nr))\) for for all non-zero \(s\) small enough.
\end{remark}

The following immediate corollary of the fact that spheres have positive curvature in \((M,g)\) will be used in the proof of theorem \ref{maintheorem}:
\begin{corollary}\label{accelerationcorollary}
In the context of lemma \ref{acelerationlemma} let \(\gamma(t) = \exp(tv_0)\) and \(A = \frac{D^2}{ds^2}\alpha_{w(s)}(nr)_{\vert s = 0}\) be the covariant acceleration of the endpoint \(\alpha_{w(s)}\).

Then \(g(A,\gamma') < 0\) if \(w(s)\) is a non-constant geodesic in \(V_p\) with \(w(0) = v\), and \(g(A,\gamma') > 0\) if \(w(s)\) is a non-constant geodesic in \(V_q\) with \(w(0) = v\).
\end{corollary}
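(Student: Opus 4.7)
The plan is to differentiate $f_p(\beta(s))$ twice, where $\beta(s)=\alpha_{w(s)}(nr)$, and read the sign of $g(A,\gamma')$ off the chain-rule expansion
\[
\partial_s^2 f_p(\beta(s))\big\vert_{s=0} \;=\; \mathrm{Hess}(f_p)\bigl(\beta'(0),\beta'(0)\bigr)\;+\;g\bigl(\nabla f_p\vert_{\beta(0)},\,A\bigr).
\]
The left-hand side is strictly negative by Lemma \ref{acelerationlemma}, so once I check that the Hessian term is non-negative and that $\nabla f_p\vert_{\beta(0)}=\gamma'$, the inequality $g(A,\gamma')<0$ will follow immediately.

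For the gradient identification, I observe that since $v=(\sigma_1 v_0,\dots,\sigma_n v_0)$ the broken geodesic $\alpha_v$ traces $\gamma$ back and forth, ending at $\gamma(R)$ with $R=(\sum_i\sigma_i)r\in[-nr,nr]$. Because $p=\exp(-2nrv_0)=\gamma(-2nr)$ lies on $\gamma$ on the far side of $\beta(0)$, the unique minimizing unit-speed geodesic from $p$ to $\beta(0)$ is a segment of $\gamma$, arriving in the direction $\gamma'(R)$; hence $\nabla f_p\vert_{\beta(0)}=\gamma'$. For the Hessian bound I use that the proof of Lemma \ref{acelerationlemma} already showed that $s=0$ is a maximum of $f_p\circ\beta$ (the telescoping bound $f_p(\beta(s))\le f_p(x)+R$ holds, with equality at $s=0$), so in particular $\beta'(0)\perp\nabla f_p\vert_{\beta(0)}$. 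On vectors perpendicular to its own gradient, the distance function from a point in a simply connected non-positively curved manifold has non-negative Hessian by standard Hessian comparison; this is precisely the ``spheres have positive curvature'' input flagged in the statement of the corollary.

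Putting the three ingredients together yields $g(A,\gamma')<0$ for $w\in V_p$. For $w\in V_q$ the argument is verbatim after replacing $f_p$ with $f_q$; the only change is that $q=\exp(2nrv_0)=\gamma(2nr)$ sits on $\gamma$ on the opposite side of $\beta(0)$, so $\nabla f_q\vert_{\beta(0)}=-\gamma'$, and the identical inequality flips sign to give $g(A,\gamma')>0$.

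The only step that is not routine is the Hessian comparison estimate for the distance function, and even that is a standard consequence of non-positive curvature; everything else is chain-rule bookkeeping combined with facts already established in the preceding lemmas. I do not anticipate any serious obstacle.
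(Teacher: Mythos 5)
Your proposal is correct and is exactly the argument the paper intends (the paper states the corollary as an immediate consequence of Lemma \ref{acelerationlemma} together with the convexity of distance spheres, without writing out the details): the chain-rule identity $\partial_s^2 f_p(\beta(s))\vert_{s=0}=\mathrm{Hess}(f_p)(\beta'(0),\beta'(0))+g(\nabla f_p,A)$, the identification $\nabla f_p\vert_{\beta(0)}=\gamma'$ (and $\nabla f_q\vert_{\beta(0)}=-\gamma'$) from the fact that $p$ and $q$ lie on $\gamma$ beyond the endpoint, and the non-negativity of the Hessian term are precisely the intended ingredients. In fact in a Cartan--Hadamard manifold $\mathrm{Hess}(f_p)\ge 0$ on all of $T_{\beta(0)}M$, so the perpendicularity of $\beta'(0)$ to the gradient, while true, is not even needed.
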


\section{Proof of theorem \ref{maintheorem}}

We begin by deducing consequences from lemma \ref{criticalsetlemma}.

First, at all critical points of \(\varphi\) the image of the tangent map \(D\varphi\) has codimension one.

Furthermore, any critical point has the form \(v = (\sigma_1v_0,\ldots,\sigma_n v_0)\) for some \(v_0 \in T_x^1M\) and signs \(\sigma_1,\ldots,\sigma_d \in \lbrace -1,1\rbrace\).

Fix such a point \(v\) and consider the component \(S\) of the set of critical points containing \(v\) (which consists in maintaining the signs \(\sigma_i\) and varying the vector \(v_0\)).

Notice that \(S\) is an embedded sphere in \((T_x^1M)^n\) and has dimension \(d-1\).

If \(\sum v_i \neq 0\) one has that \(\text{ker}(D_v\varphi) \cap T_vS = \lbrace 0\rbrace\).  Notice that since, the dimension of the image of \(D_v\varphi\) is \(d-1\) this implies
\[\text{ker}(D_v\varphi) + T_vS = T_v(T_x^1M)^n.\]

In particular, this is satisfied at all critical points if \(n\) is odd.

This is the fold condition, except we haven't verified that the singularities of \(\varphi\) very the required transversality condition.

Hence, it remains to prove that \(j_1\varphi \pitchfork S_1\) to establish the theorem.

Let \(v,v_0,\sigma_1,\ldots,\sigma_n\) be as above but do not assume \(\sum v_i \neq 0\) or \(n\) odd.

Let \(p,q\) and \(V_p,V_q\) be defined as in lemma \ref{acelerationlemma}.  Let \(a\) and \(b\) be the dimension of \(V_p\) and that of \(V_q\) respectively.

We consider smooth local coordinates \((y_1,\ldots,y_{n(d-1)})\) identifying a neighborhood of \(v\) in \((T_x^1M)^n\) with a neighborhood of \(0\) in \(\R^a \times \R^b\) such that curves of the form
\[s \mapsto (sy_1,\ldots,sy_a,0,\ldots,0)\]
or 
\[s \mapsto (0,\ldots,0,sy_{a+1},\ldots,sy_{a+b})\]
correspond to geodesics through \(v\) in \(V_p\) and \(V_q\) respectively.

Using the exponential mapping at \(\varphi(v)\) we consider smooth local coordinates \((z_1,\ldots,z_d)\) identifying a neighborhood of \(\varphi(v)\) in \(M\) with a neighborhood of \(0\) in \(\R^d\) in such a way that the curves
\[t \mapsto (tz_1,\ldots,tz_d)\]
are geodesics with
\[t \mapsto (0,\ldots,0,t)\]
being a reparametrization of the geodesic \(\gamma(t)  = \exp_x(tv_0)\) and the subspace \(\lbrace z_d = 0\rbrace\) corresponding to the image under \(\exp_{\varphi(v)}\) of the subspace perpendicular to \(\gamma'\) at \(\varphi(v)\).

Set \(y = (y_1,\ldots,y_{n(d-1}))\), let \(\psi(y_1,\ldots,y_{n(d-1)}) = (\psi_1(y),\ldots,\psi_d(y))\) be the composition of \(\varphi\) with the corresponding coordinate changes, so that it maps a neighborhood of \(0\) in \(\R^{n(d-1)}\) to a neiborhood of \(0\) in \(\R^d\).

By lemma \ref{criticalsetlemma} the image of the tangent map \(D_0\psi\) is \(\R^{d-1} \times \lbrace 0\rbrace\).   It follows that \(\nabla\psi_d(0) = 0\) so \(0\) is a critical point of the real valued function \(\psi_d\).

On the other hand, since in normal coordinates the acceleration of a curve coincides with the usual second derivative at the instant it passes through \(0\) one has by corollary \ref{accelerationcorollary} that the second derivative of \(\psi_d\) at \(0\) along
any non-constant curve of the form
\[s \mapsto (sy_1,\ldots,sy_a,0,\ldots,0)\]
or 
\[s \mapsto (0,\ldots,0,sy_{a+1},\ldots,sy_{a+b})\]
is not zero.

This implies that \(\psi_d\) has a non-degenerate singularity at \(0 \in \R^{n(d-1)}\).

Therefore \(j_1\varphi \pitchfork S_1\) as required.

\section{Acknowledgments}

The author's would like to thank Gilles Courtois, Martin Reiris, and Rafael Ruggiero for their help. The second author is partially supported by CNPq grant 308489/2017-9.

\end{document}